\theoremstyle{plain}
\newtheorem{theorem}{Theorem}[section]
\newtheorem{lemma}[theorem]{Lemma}
\theoremstyle{definition}
\numberwithin{equation}{section}
\newcommand\D{{\mathcal D}}
\newcommand\RR{{\mathbb R}}
\newcommand\NN{{\mathbb N}}
\newcommand*\pFqskip{8mu}
\newcommand*\pFq{\begingroup
        \catcode`\,\active
        \def ,{\mskip\pFqskip\relax}%
        \dopFq
}
\def\dopFq#1#2#3#4#5{%
        {}_{#1}F_{#2}\biggl(\genfrac..{0pt}{}{#3}{#4};#5\biggr)%
        \endgroup
}
\begin{document}

\begin{frontmatter}

\title{From Krall discrete orthogonal polynomials to Krall polynomials%
\tnoteref{nota}%
}
\tnotetext[nota]%
{Partially supported by MTM2015-65888-C4-1-P  (Ministerio de Econom\'ia y Competitividad),
FQM-262, FQM-7276 (Junta de Andaluc\'ia) and Feder Funds (European
Union).}

\author{Antonio J. Dur\'an}
\address{Departamento de An\'alisis Matem\'atico, Universidad de Sevilla, 41080 Sevilla, Spain}
\ead{duran@us.es}

\begin{abstract}
We show how to get Krall polynomials from Krall discrete polynomials using a procedure of passing to the limit in some of the parameters of the family. We also show that this procedure has to be different to the standard one used in the Askey scheme to go from the classical discrete
 polynomials to the classical polynomials.
\end{abstract}

\begin{keyword}
Krall polynomials\sep classical polynomials\sep classical discrete polynomials\sep Askey scheme.
\MSC[2010] 33C45\sep 42C05\sep 33E30
\end{keyword}

\maketitle
\end{frontmatter}

\section{Introduction}
The hypergeometric representation of classical and classical discrete orthogonal polynomials gives rise to the socalled Askey scheme or Askey tableau (see \cite{KLS}, Ch. 9). Askey scheme is an organized hierarchy of hypergeometric orthogonal polynomials, namely, Hermite, Laguerre, Jacobi (the classical families), Charlier, Meixner, Krawtchouk, Hahn (the classical discrete families), dual Hahn, Racah, Wilson, etc.
One can navigate through the Askey scheme by passing to the limits in the parameters. For instance, one can get the Laguerre polynomials $(L_n^\alpha)_n$, from the Meixner polynomials $(m_n^{a,c})_n$ (normalized so that the leading coefficient of $m_n^{a,c}$ is $1/n!$) by setting $x\to x/(1-a)$ and $c=\alpha +1$ and taking limit as $a\to 1$; more precisely
\begin{equation}\label{blmel}
\lim_{a\to 1}(a-1)^nm_n^{a,\alpha +1}\left(\frac{x}{1-a}\right)=L_n^{\alpha}(x)
\end{equation}
see \cite{KLS}, p. 243 (take into account that we are using for the
Meixner polynomials a different normalization to that in \cite{KLS}).

Krall or bispectral polynomials are one of the most interesting generalization of classical and classical discrete polynomials.
Krall polynomials are orthogonal polynomials with respect to a positive measure which in addition are eigenfunctions of a differential operator of order $k$ bigger than 2. Krall discrete polynomials appear by changing in that definition differential operator to difference operator.

Krall polynomials were introduced by H.L. Krall. In the late thirties, he proved that orthogonal polynomials only can be eigenfunctions of
a differential operator of even order. He also
classified all families of orthogonal polynomials which in addition are eigenfunctions of a fourth order differential operator (see \cite{Kr1,Kr2}). Besides the classical families, he found three other families of polynomials orthogonal with respect to the positive measures $e^{-x}+u\delta_0$, $x\ge 0$, $(1-x)^\alpha +u\delta_1$, $-1< x <1$, and $1+u\delta_{-1}+u\delta_1$, $-1\le x \le 1$, respectively.

Since the eighties, a big amount of effort has been devoted to this issue (with contributions
by L.L. Littlejohn \cite{L1,L2,L3}, A. M. Krall  \cite{KrL}, J. Koekoek and R. Koekoek \cite{koekoe,koe,koekoe2},
F.A. Grünbaum and L. Haine et al. \cite{GrH,GrHH,GrY}, A. Zhedanov \cite{Z}, K.H. Kwon et al. \cite{KLY,KLY2}, P. Iliev \cite{Il1,Il2},
and the list is by no means exhaustive).

If we consider positive measures, the recipe for constructing Krall polynomials is to take the Laguerre and Jacobi weights, assume that one or two of the parameters are nonnegative integers and add a Dirac delta at one or two of the end points of the interval of
orthogonality. For the Laguerre case, the precise result is the following (see \cite{koekoe}; also \cite{Il2},\cite{du1}).
For $\kappa$ a positive integer, the orthogonal polynomials $(L_n^{\kappa;u})_n$ with respect to the positive measure
\begin{equation}\label{lagk}
x^{\kappa-1} e^{-x}+\kappa!u\delta_{0},
\end{equation}
are eigenfunctions of a differential operator of order $2\kappa+2$. Orthogonal polynomials $(L_n^{\kappa;u})_n$ with respect to (\ref{lagk}) can be expanded in terms of two consecutive Laguerre polynomials as follows (see Lemma A.3 in \cite{du1}; also \cite{Il2}. For other representations see \cite{koor})
\begin{equation}\label{elagp}
L_n^{\kappa;u}(x)=L_n^{\kappa}(x)-\frac{(\kappa-1)!+u(n+1)_{\kappa}}{(\kappa-1)!+u(n)_{\kappa}}L_{n-1}^{\kappa}(x).
\end{equation}
The story of Krall discrete polynomials is somehow different. In fact, except for the discrete classical families themselves, no examples satisfying difference equations of finite order bigger than two were known until very recently. What we knew for the continuous case (or for the $q$ case) had seemed to be of little help  because adding Dirac deltas to the discrete classical families
seems not to work. Indeed, answering a question posed by R. Askey in 1991 (see page 418 of \cite{BGR}), H. Bavinck, H. van Haeringen and R. Koekoek
studied orthogonal polynomials with respect to Charlier and Meixner measures together with a Dirac delta at $0$ and found that they
satisfy certain type of difference equations of infinite order (\cite{BH,BK}). In 2012, this author introduced
the first examples of orthogonal polynomials $(p_n)_n$ which are common eigenfunctions of a difference
operator of order bigger than $2$ and posed a number of conjectures about them (\cite{du0}). The orthogonalizing measures for these families of polynomials are constructed by multiplying the classical discrete weights by carefully chosen polynomials. This kind of transformation which consists in multiplying a measure $\mu$ by a polynomial $q$ is called a Christoffel transform. It has a long tradition in the context of orthogonal polynomials: it goes back a century and a half ago when
E.B. Christoffel (see \cite{Chr} and also \cite{Sz}) studied it for the particular case $q(x)=x$. The conjectures posed in \cite{du0} have already been proved in a number of papers (\cite{du1,ddI,ddI2}, see also \cite{dudh}). Here it is an example constructed from the Meixner measure.

For $\kappa$ a positive integer and $a$ and $c$ real numbers with $0<a<1$ and $c>\kappa+1$, the orthogonal polynomials $(m_n^{a,c;\kappa})_n$ with respect to the positive measure
\begin{equation}\label{mk}
\sum_{x=0}^\infty \prod_{j=1}^{\kappa} (x+c-j)\frac{a^x\Gamma(x+c-\kappa-1)}{x!}\delta_x,
\end{equation}
are eigenfunctions of a difference operator of order $2\kappa+2$.  Orthogonal polynomials $(m_n^{a,c;\kappa})_n$ with respect to (\ref{mk}) can be expanded in terms of two consecutive Meixner polynomials as follows (see Theorem 5.4 in \cite{du1})
\begin{equation}\label{emp}
m_n^{a,c;\kappa}(x)=m_n^{a,c}(x)+\frac{am_{\kappa}^{1/a,2-c}(-n-1)}{(1-a)m_{\kappa}^{1/a,2-c}(-n)}
m_{n-1}^{a,c}(x).
\end{equation}
The positivity of the measure (\ref{mk}) implies that $m_{\kappa}^{1/a,2-c}(-n)\not =0$, $n\in \NN$.

Taking into account the limit (\ref{blmel}) in the Askey scheme, a natural question arises: can one get in the same form the Krall polynomials $(L_n^{\kappa;u})_n$ (\ref{elagp}) from the Krall discrete polynomials $(m_n^{a,c;\kappa})_n$ (\ref{emp}). The answer is no. Indeed, in Section 2 we prove that
\begin{equation}\label{blmeli}
\lim_{a\to 1}(a-1)^nm_n^{a,c;\kappa}\left(\frac{x}{1-a}\right)=\begin{cases} L_n^{c-2}(x),& \mbox{for $c\not =2,\cdots, \kappa +1$},\\
L_n^{c-1;\infty}(x),& \mbox{for $c=2,\cdots, \kappa +1$},
\end{cases}
\end{equation}
where $L_n^{\kappa;\infty}(x)=L_n^{\kappa}(x)-\frac{n+\kappa}{n}L_n^{\kappa}(x)$, which correspond with the degenerate case of (\ref{elagp}) for $u=\infty$. If the parameters do not satisfy the assumptions $0<a<1$ and $c>\kappa+1$, neither the positivity nor the existence of the measure (\ref{mk}) is guaranteed (this is the case when $c=2,\cdots, \kappa +1$); in that case we  implicitly assume
that $m_{\kappa}^{1/a,2-c}(-n)\not =0$, $n\in\NN$, so that the polynomials (\ref{emp}) are well-defined.

The purpose of this paper is to show that one can get the Krall polynomials from the Krall discrete polynomials, but taking limits in a different way to the procedure used in the Askey scheme. There one takes limit in only one parameter of the classical discrete family while the other parameters are fixed and independent of the parameter in which one takes limit. For Krall polynomials, one has to carefully choose suitable functions and equal all those parameters to such functions in the parameter in which one takes limit. As an example, here it is a way to get the Krall Laguerre polynomials $(L_n^{\kappa;u})_n$ (\ref{elagp}) from the Krall Meixner polynomials $(m_n^{a,c;\kappa})_n$ (\ref{emp}): in this case we choose the function $\phi(x)=\kappa+1+(1-x)^{\kappa}/u$ and write $c=\phi(a)$. More precisely
\begin{equation}\label{limit1}
\lim_{a\to 1}(a-1)^nm_n^{a,\kappa+1+(1-a)^{\kappa}/u;\kappa}\left(\frac{x}{1-a}\right)=L_n^{\kappa;u}(x).
\end{equation}
The limits from Krall Meixner to Krall Laguerre polynomials will be considered in Section 2, while in Section 3, we study the limits from Krall Hahn to Krall Jacobi polynomials.

We finish this introduction with a comment on exceptional polynomials. Exceptional orthogonal polynomials $p_n$, $n\in X\varsubsetneq \NN$, are complete orthogonal polynomial systems with respect to a positive measure which in addition
are eigenfunctions of a second order differential operator. Exceptional discrete polynomials appear by changing in that definition differential operator to difference operator. Exceptional and exceptional discrete polynomials extend the  classical families of Hermite, Laguerre and Jacobi,
and the classical discrete families of Charlier, Meixner, Krawtchouk and Hahn, respectively.
In mathematical physics, exceptional polynomials allow to write exact
solutions to rational extensions of classical quantum potentials.
The last few years have seen a great deal of activity in the area  of exceptional orthogonal polynomials, mainly by theoretical physicists (see, for instance,
\cite{duch,dume,duhj,GUKM1,GUKM2} (where the adjective \textrm{exceptional} for this topic was introduced), \cite{GUGM,G,OS0,OS4,Qu}, and the references therein).

The most apparent difference between classical polynomials and exceptional polynomials
is that the exceptional families have gaps in their degrees, in the
sense that not all degrees are present in the sequence of polynomials (as it happens with the classical families). This
means in particular that they are not covered by the hypotheses of Bochner's and Lancaster's classification theorems for classical and classical discrete polynomials, respectively (\cite{B},\cite{La}).

Up to now no relationship has been found between Krall
and exceptional polynomials (what it is understandable because they are coming from very different problems).
However, the situation is completely different at the discrete level, where an unexpected connection between Krall discrete and exceptional discrete polynomials has been found (\cite{duch,dume,duhj}). The key for this connection is a well-known and fruitful concept regarding discrete orthogonal polynomials: duality (between the variable $x$ and the index $n$ of the polynomial). It has been shown that duality interchanges exceptional discrete polynomials with Krall discrete polynomials.
What it is relevant for this paper is that one can construct exceptional polynomials from exceptional discrete polynomials
by taking limits in some of the parameters in the same way as one goes from classical
discrete polynomials to classical polynomials in the Askey tableau. However, as we show here, despite exceptional discrete and Krall discrete polynomials are dual, this procedure does not work when dealing with Krall and Krall discrete polynomials, and it has to be changed as explained in this paper.

\section{From Krall Meixner to Krall Laguerre polynomials}
In this Section we study how to get the Krall Laguerre polynomials $(L_n^{\kappa;u})_n$ orthogonal with respect to the positive measure (\ref{lagk}) from the Krall Meixner polynomials $(m_n^{a,c;\kappa})_n$ orthogonal with respect to the positive discrete measure (\ref{mk}).

We start with some basic definitions and facts about Meixner and Laguerre polynomials, which we will need later.
For $a\not =0, 1$ we write $(m_{n}^{a,c})_n$ for the sequence of Meixner polynomials defined by
\begin{align}\label{mxpol}
m_{n}^{a,c}(x)&=\frac{a^n}{(1-a)^n}\sum _{j=0}^n a^{-j}\binom{x}{j}\binom{-x-c}{n-j}\\\label{mxpol2}&=\frac{a^n(c)_n}{(a-1)^nn!}\pFq{2}{1}{-n,-x}{c}{1-1/a}
\end{align}
(we have taken a slightly different normalization from the one used in \cite{KLS}, pp, 234-7).
They satisfy the following three term recurrence formula ($m_{-1}=0$)
\begin{equation}\label{mxttrr}
xm_n^{a,c}=(n+1)m_{n+1}^{a,c}-\frac{(a+1)n+ac}{a-1}m_n^{a,c}+\frac{a(n+c-1)}{(a-1)^2}m_{n-1}^{a,c}, \quad n\ge 0.
\end{equation}
For $\alpha\neq-1,-2,\ldots$, we write $(L_n^\alpha )_n$ for the sequence of Laguerre polynomials
\begin{equation}\label{deflap}
L_n^{\alpha}(x)=\sum_{j=0}^n\frac{(-x)^j}{j!}\binom{n+\alpha}{n-j}
\end{equation}
(that and the next formulas can be found in \cite{EMOT}, vol. II, pp. 188--192; see also \cite{KLS}, pp, 241-244).
They satisfy the three-term recurrence formula ($L_{-1}^{\alpha}=0$)
\begin{equation}\label{lagttrr}
xL_n^{\alpha}=-(n+1)L_{n+1}^{\alpha}+(2n+\alpha+1)L_n^{\alpha}-(n+\alpha)L_{n-1}^{\alpha}.
\end{equation}
We also use the well-known formula
\begin{equation}\label{f1lag}
L_n^{\alpha-1}(x)=L_n^{\alpha}(x)-L_{n-1}^{\alpha}(x).
\end{equation}
From (\ref{mxpol2}) we easily see
\begin{align}\label{lmk}
\lim_{a\to 1}(a-1)^\kappa m_\kappa^{1/a,2-c}(z)&=(-1)^\kappa \binom{\kappa+1-c}{\kappa}, \quad c\not=2,\cdots, \kappa+1,\\
\label{lmk2}
\lim_{a\to 1}(a-1)^{\kappa+1-c} m_\kappa^{1/a,2-c}(z)&=(-1)^{\kappa+1-c} \binom{z}{c-1}, \quad c=2,\cdots, \kappa+1.
\end{align}
Using induction on the recurrence relations (\ref{mxttrr}) and (\ref{lagttrr}), one can easily generalize the
limit (\ref{blmel})  as follows: if $\phi$ is a function of $a$ with
$\lim_{a\to 1}\phi(a)=\alpha+1$ then
\begin{equation}\label{blmel3}
\lim_{a\to 1}(a-1)^nm_n^{a,\phi(a)}\left(\frac{x}{1-a}\right)=L_n^{\alpha}(x).
\end{equation}
\bigskip

We first prove the limits (\ref{blmeli}). The first one is an easy consequence of (\ref{emp}), (\ref{blmel}), (\ref{lmk}) and (\ref{f1lag}).
The second one is straightforward from (\ref{emp}), (\ref{blmel}), (\ref{lmk2}) and (\ref{elagp}).

\bigskip

We next prove the limit (\ref{limit1}). For the benefit of the reader we consider first the case $\kappa=1$.
The numerator in the coefficient of $m_{n-1}^{a,c}$ in (\ref{emp}) is $m_1^{1/a,2-c}(-n-1)$. Using (\ref{mxpol}), we  have
$$
m_1^{1/a,2-c}(x)=x+\frac{c-2}{a-1}.
$$
We see then that if we write $c=\phi(a)$ with $\phi(1)=2$, we have
$$
\lim_{a\to 1}\frac{c-2}{a-1}=\phi'(1),
$$
and $\phi'(1)$ can be used as a free parameter. Hence, by setting $c=2+(1-a)/u$, we straightforwardly get using (\ref{emp}),
(\ref{blmel3}) and (\ref{elagp}):
\begin{equation*}
\lim_{a\to 1}(a-1)^nm_n^{a,2+(1-a)/u;1}\left(\frac{x}{1-a}\right)=L_n^{1;u}(x).
\end{equation*}
In order to manage the case for an arbitrary positive integer $\kappa$, we need the following Lemma.
\begin{lemma}
$$
\lim_{a\to 1}m_{\kappa}^{1/a,-k+1-(1-a)^{\kappa}/u}(z)=\frac{(-1)^{\kappa}}{\kappa u}+\frac{(z-\kappa+1 )_{\kappa}}{\kappa !}.
$$
\end{lemma}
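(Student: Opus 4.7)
The plan is to work from the hypergeometric representation \eqref{mxpol2}, which with $b=1/a$ yields
$$
m_\kappa^{1/a,c}(z)=\frac{(c)_\kappa}{(1-a)^\kappa\,\kappa!}\,\pFq{2}{1}{-\kappa,-z}{c}{1-a}.
$$
The ${}_2F_1$ truncates to a finite sum over $k=0,1,\ldots,\kappa$, so after substituting $c=-\kappa+1-(1-a)^\kappa/u$ the whole expression becomes an explicit finite sum in $a$, and the lemma reduces to identifying which summands survive as $a\to 1$.

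Setting $\epsilon=(1-a)^\kappa/u$, the key observation is that the factor $(c)_\kappa=\prod_{j=0}^{\kappa-1}(j-\kappa+1-\epsilon)=(-1)^\kappa\,\epsilon\prod_{k=1}^{\kappa-1}(k+\epsilon)$ is of order $(1-a)^\kappa$, exactly matching the $1/(1-a)^\kappa$ prefactor. To analyze the general summand one rewrites the ${}_2F_1$ times $(c)_\kappa$ as $\sum_{k=0}^\kappa \tfrac{(-\kappa)_k(-z)_k}{k!}(1-a)^k\,(c)_\kappa/(c)_k$, and for $k<\kappa$ the ratio $(c)_\kappa/(c)_k$ is a product of $\kappa-k$ factors of the form $(j-\kappa+1-\epsilon)$ with $k\le j\le\kappa-1$, exactly one of which (the one with $j=\kappa-1$) vanishes to first order in $\epsilon$. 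Hence for $1\le k\le\kappa-1$ the summand carries an additional $\epsilon\cdot(1-a)^k/(1-a)^\kappa=(1-a)^k/u$ which tends to $0$, so only the extreme indices $k=0$ and $k=\kappa$ contribute in the limit.

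At $k=0$ one gets $\bigl((c)_\kappa/(1-a)^\kappa\kappa!\bigr)\to (-1)^\kappa(\kappa-1)!/(\kappa!u)=(-1)^\kappa/(\kappa u)$, which is the first term in the statement. At $k=\kappa$ the Pochhammer ratio equals $1$, and the prefactor $(1-a)^\kappa$ cancels the explicit $1/(1-a)^\kappa$, so this term contributes $\tfrac{(-\kappa)_\kappa(-z)_\kappa}{(\kappa!)^2}$; using the elementary identities $(-\kappa)_\kappa=(-1)^\kappa\kappa!$ and $(-z)_\kappa=(-1)^\kappa(z-\kappa+1)_\kappa$ this simplifies to $(z-\kappa+1)_\kappa/\kappa!$, yielding the second term.

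The only delicate point is the middle step: one must be sure that for every $1\le k\le\kappa-1$ the ratio $(c)_\kappa/(c)_k$ really is $O(\epsilon)$ (not lower order), i.e.\ that precisely one of its factors vanishes at $\epsilon=0$; this is verified by inspecting the range $j=k,\ldots,\kappa-1$ of $(c+j)=j-\kappa+1-\epsilon$, where the unique vanishing index is $j=\kappa-1$. After that, the computation is a direct evaluation of the $k=0$ and $k=\kappa$ summands with all $\epsilon$-dependent factors set to their limits.
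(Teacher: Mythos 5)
Your proposal is correct and follows exactly the route the paper intends: it starts from the hypergeometric representation (\ref{mxpol2}) and carries out the ``careful computation'' that the paper's one-line proof leaves to the reader, correctly isolating the $k=0$ and $k=\kappa$ terms via the $O(\epsilon)$ behaviour of $(c)_\kappa/(c)_k$. (You also implicitly, and correctly, read the statement's ``$-k+1$'' as the typo for ``$-\kappa+1$'' forced by its use in proving (\ref{limit1}).)
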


\begin{proof}
It follows from (\ref{mxpol2}) after a careful computation.
\end{proof}

The limit (\ref{limit1}) follows now easily using the previous Lemma, (\ref{emp}),
(\ref{blmel3}) and (\ref{elagp}).

\bigskip
For $\kappa$ a positive integer and $a$ and $c$ real numbers with $0<a<1$ and
$$
\begin{cases} c>\kappa+1,&\mbox{for $\kappa$ even},\\
\displaystyle c\in\cup_{j=-(\kappa-1)/2+1}^1(\kappa +2j-2,\kappa+2j-1),&\mbox{for $\kappa$ odd},
\end{cases}
$$
consider now the positive measure
\begin{equation}\label{mk2}
\sum_{x=-\kappa-1}^\infty \prod_{j=1}^{\kappa} (x+j)\frac{a^x\Gamma(x+c)}{(x+\kappa+1)!}\delta_x.
\end{equation}
Orthogonal polynomials $(M_n^{a,c;\kappa})_n$ with respect to (\ref{mk2}) can be expanded in terms of two consecutive Meixner polynomials as follows (see Theorem 5.7 in \cite{du1})
\begin{equation}\label{emp2}
M_n^{a,c;\kappa}(x)=m_n^{a,c}(x)+\frac{m_{\kappa}^{a,2-c}(-n-1)}{(1-a)m_{\kappa}^{a,2-c}(-n)}
m_{n-1}^{a,c}(x).
\end{equation}
The polynomials $(M_n^{a,c;\kappa})_n$ are also eigenfunctions of a difference operator of order $2\kappa+2$ (see \cite{du1}).
Proceeding as before one can also get the Krall Laguerre polynomials (\ref{elagp}) from the Krall Meixner polynomials
$(M_n^{a,c;\kappa})_n$ (\ref{emp2}):
\begin{equation}\label{limit2}
\lim_{a\to 1}(a-1)^nM_n^{a,\kappa+1+(a-1)^{\kappa}/u;\kappa}\left(\frac{x}{1-a}\right)=L_n^{\kappa;u}(x).
\end{equation}

\section{From Krall Hahn to Krall Jacobi polynomials}
In this section we study the limits between Krall Hahn and Krall Jacobi polynomials.

We include here basic definitions and facts about dual Hahn, Hahn and Jacobi polynomials, which we will need later.

For a real number $u$ and a positive integer $j$, we consider the polynomials $\lambda^{u}_j$ and $\theta^u$ of degree $j$ and $2$, respectively, defined by
\begin{align}\label{deflamb}
\lambda^{u}_j(x)&=\prod_{i=0}^{j-1}(x-i(u+i+1)),\\\label{defth}
\theta_x^u&=x(x+u+1).
\end{align}
For real numbers $a $ and $b$, we write $(R_{n}^{a,b,N})_n$ for the sequence of dual Hahn polynomials defined by
\begin{equation}\label{dhpol}
R_{n}^{a,b,N}(x)=\sum _{j=0}^n\frac{(-n)_j(-N+j)_{n-j}(a+j+1)_{n-j}}{(-1)^j j!}\lambda^{a+b}_j(x)
\end{equation}
(we have taken a slightly different normalization from the one used in \cite{KLS}, pp, 234-7 from where the next formulas can be easily derived). Notice that $R_{n}^{a,b,N}$ is always a polynomial of degree $n$.
Using that
\begin{equation}\label{lth}
(-1)^j\lambda^{a+b}_j(\theta_x^{a+b})=(-x)_j(x+a+b+1)_j,
\end{equation}
we get the hypergeometric representation
$$
R_{n}^{a,b,N}(\theta_x^{a+b})=(-N)_n(a+1)_n\pFq{3}{2}{-n,-x,x+a+b+1}{a+1,-N}{1}.
$$
The following limits are consequences of (\ref{dhpol}) (after careful computations)
\begin{align}\label{lhk}
\lim_{N\to \infty}\frac{R_\kappa^{-b,-a,a+b+N}(z)}{N^\kappa}&=(b-\kappa)_\kappa, \quad b\not=1,\cdots, \kappa,\\
\label{lhk2}
\lim_{N\to \infty}\frac{R_\kappa^{-b,-a,a+b+N}(z)}{N^{\kappa-b}}&=\frac{(b+1)_{\kappa-b}}{(-1)^{b+\kappa}}
\lambda_b^{-b-a}(z), \quad b=1,\cdots, \kappa,\\\label{lhk3}
\lim_{N\to \infty}R_\kappa^{-\phi_\kappa^s(N),-\psi(N),\psi(N)+\phi_\kappa^s(N)+N}(z)&
=(\kappa-1)!s+\lambda_\kappa^{-\kappa-a}(z),
\end{align}
where $\phi _\kappa^s (N)=\kappa+s/N^\kappa$ and $\lim_{N\to \infty}\psi(N)=a$.

For $a+b \not =-1,-2,\cdots $ we write $(h_{n}^{a,b,N})_n$ for the sequence of Hahn polynomials defined by
\begin{equation}\label{hpol}
h_{n}^{a,b,N}(x)=\frac{(a+1)_n}{n!}\pFq{3}{2}{-n,-x,n+a+b+1}{a+1,-N}{1}
\end{equation}
(we have taken a slightly different normalization from the one used in \cite{KLS}, pp, 234-7 from where
the next formulas can be easily derived). Notice that if $N$ is not a positive integer, $h_{n}^{a,b,N}$ is always a polynomial of degree $n$,
and for $N$ a positive integer, $h_{n}^{a,b,N}$ is a polynomial of degree $n$ for $n=0,\cdots , N$.
For $N$ a positive integer and $-1<a,b$, the polynomials $h_n^{a,b,N}$, $n=0,\cdots, N$, are orthogonal with respect to the positive measure
\begin{equation}\label{hw}
\rho_{a,b,N}=\sum _{x=0}^N \binom{x+a}{x}\binom{b+N-x}{N-x}\delta_{x}.
\end{equation}
For $\alpha,\beta \in \RR , \alpha,\beta \not=-1,-2,\cdots$, we use the standard definition of the Jacobi polynomials $(P_{n}^{\alpha,\beta})_n$
\begin{equation}\label{defjac}
P_{n}^{\alpha,\beta}(x)=2^{-n}\sum _{j=0}^n \binom{n+\alpha}{j}\binom{n+\beta}{n-j}(x-1)^{n-j}(x+1)^{j}
\end{equation}
(see \cite{EMOT}, pp. 169-173 and also \cite{KLS}, pp. 216-221).

For $\alpha,\beta, \alpha+\beta \not =-1,-2,\cdots$,
they are orthogonal with respect to a measure $\mu _{\alpha,\beta}=\mu _{\alpha,\beta}(x)dx$, which it is positive only when
$\alpha ,\beta >-1$, and then
\begin{equation}\label{jacw}
\mu_{\alpha,\beta}(x) =(1-x)^\alpha (1+x)^{\beta}, \quad -1<x<1.
\end{equation}
We will  use the following formulas
\begin{align}\label{Lagder}
\frac{(2n+\alpha+\beta)P_n^{\alpha,\beta-1}(x)}{(n+\alpha+\beta)}&=P_n^{\alpha,\beta}(x)+\frac{(n+\alpha)P_{n-1}^{\alpha,\beta}(x)}{(n+\alpha+\beta)},
\\\label{Lagdere}
\frac{(2n+\alpha+\beta)P_n^{\alpha-1,\beta}(x)}{(n+\alpha+\beta)}&=P_n^{\alpha,\beta}(x)-\frac{(n+\beta)P_{n-1}^{\alpha,\beta}(x)}{(n+\alpha+\beta)},
\\\label{Lagder2}
\frac{(2n+\alpha+\beta-2)_3 P_n^{\alpha-1,\beta-1}(x)}{(n+\alpha+\beta)(n+\alpha+\beta-1)^2}&=\begin{vmatrix} P_n^{\alpha,\beta}(x)&P_{n-1}^{\alpha,\beta}(x)&P_{n-2}^{\alpha,\beta}(x)\\
\frac{(n+\beta-1)_2}{(n+\alpha+\beta-1)_2}&\frac{n+\beta-1}{n+\alpha+\beta-1}&1\\
\frac{(n+\alpha-1)_2}{(n+\alpha+\beta-1)_2}&\frac{-(n+\alpha-1)}{n+\alpha+\beta-1}&1
\end{vmatrix},\\\label{jaci}
\frac{\frac{n}{n+\beta}P_n^{\alpha-1,\beta}(x)+P_{n-1}^{\alpha-1,\beta}(x)}{(n+\alpha+\beta-1)_2}&=
\frac{\begin{vmatrix} P_n^{\alpha,\beta}(x)&P_{n-1}^{\alpha,\beta}(x)&P_{n-2}^{\alpha,\beta}(x)\\
\frac{(n+\beta-1)_2}{(n+\alpha+\beta-1)_2}&\frac{n+\beta-1}{n+\alpha+\beta-1}&1\\
1&\frac{-n}{n+\beta}&\frac{(n-1)_2}{(n+\beta-1)_2}
\end{vmatrix}}{(2n+\alpha+\beta)(2n+\alpha+\beta-2)}.
\end{align}
Identities (\ref{Lagder}) and (\ref{Lagdere}) can be found in \cite{EMOT}, p. 173, (35) and (36), respectively. (\ref{Lagder2}) and (\ref{jaci}) can be deduced from (\ref{Lagder}) and (\ref{Lagdere}) after some calculations.

One can get the Jacobi polynomials (\ref{defjac}) from the Hahn polynomials (\ref{hpol}) by setting $x\to (1-x)N/2$ and taking limit as $N\to \infty$; more precisely
\begin{equation}\label{blhj}
\lim_{N\to +\infty}h_n^{a,b,N}\left(\frac{(1-x)N}{2}\right)=P_n^{a,b}(x).
\end{equation}
(see \cite{KLS}, p. 207; note that we are using for
Hahn polynomials a different normalization to that in \cite{KLS}).
From the recurrence relation for Hahn and Jacobi polynomials, one can easily extend that limit: if $\lim_{N\to \infty}\phi_1(N)=a$
and $\lim_{N\to \infty}\phi_2(N)=b$ then
\begin{equation}\label{blhj2}
\lim_{N\to +\infty}h_n^{\phi_1(N),\phi_2(N),N}\left(\frac{(1-x)N}{2}\right)=P_n^{a,b}(x).
\end{equation}
\bigskip

If we consider positive measures, Krall Jacobi polynomials can be obtained by adding a Dirac delta at one or two of the end points of the interval of
orthogonality and assuming that the corresponding parameters are nonnegative integers. We consider each case separately.

\subsection{Adding a Dirac delta at $-1$}
We start adding a Dirac delta at $-1$. For $\kappa$ a positive integer, and $\alpha,u\in \RR$ with $\alpha>-1, u> 0$,  the orthogonal polynomials $(P_n^{\alpha,\kappa;u})_n$ with respect to the positive measure
\begin{equation}\label{jack}
(1-x)^\alpha(1+x)^{\kappa-1} +\kappa !u\delta_{-1},
\end{equation}
are eigenfunctions of a differential operator of order $2\kappa+2$ (see \cite{Z,du1}). Orthogonal polynomials $(P_n^{\alpha,\kappa;u})_n$ with respect to (\ref{jack}) can be expanded in terms of two consecutive Jacobi polynomials as follows (see Lemma A.9 in \cite{du1}; also \cite{Il1})
\begin{equation}\label{ejacp}
P_n^{\alpha,\kappa;u}(x)= P_n^{\alpha,\kappa}(x)+\frac{(n+\alpha) \left[2^{\alpha+\kappa}\Gamma(\kappa)+u(n+1)_\kappa(n+\alpha+1)_{\kappa}\right]}{(n+\alpha+\kappa)\left[2^{\alpha+\kappa}\Gamma(\kappa)
+u(n)_\kappa(n+\alpha)_{\kappa}\right]}P_{n-1}^{\alpha,\kappa}(x).
\end{equation}
For the limiting case $u=\infty$, the identity (\ref{jaci}) gives
\begin{equation}\label{jacii}
\frac{nP_n^{\alpha-1,\kappa;\infty}(x)}{(n+\alpha+\kappa-1)_2(n+\kappa)}=
\frac{\begin{vmatrix} P_n^{\alpha,\kappa}(x)&P_{n-1}^{\alpha,\kappa}(x)&P_{n-2}^{\alpha,\kappa}(x)\\
\frac{(n+\kappa-1)_2}{(n+\alpha+\kappa-1)_2}&\frac{n+\kappa-1}{n+\alpha+\kappa-1}&1\\
1&\frac{-n}{n+\kappa}&\frac{(n-1)_2}{(n+\kappa-1)_2}
\end{vmatrix}}{(2n+\alpha+\kappa)(2n+\alpha+\kappa-2)}.
\end{equation}

In order to get Krall Hahn polynomials, we have to apply a suitable Christoffel transform to the Hahn measure. Among the several possibilities described in \cite{du0} (see also \cite{ddI2}), we consider here the following one. For $\kappa$ and $N$ positive integers and $a$ and $b$ real numbers with $-1<a$ and $\kappa<b$, the orthogonal polynomials $h_n^{a,b,N;\kappa}$, $n=0,\cdots ,N$, with respect to the positive measure
\begin{equation}\label{hk}
\sum_{x=0}^N (N+b-\kappa+1-x)_\kappa\binom{a+x}{x}\binom{N+b-\kappa-1-x}{N-x}\delta_x,
\end{equation}
are eigenfunctions of a difference operator of order $2\kappa+2$.  Orthogonal polynomials $(h_n^{a,b,N;\kappa})_n$ with respect to (\ref{hk}) can be expanded in terms of two consecutive Hahn polynomials as follows (see Theorem 7.5 in \cite{du1})
\begin{equation}\label{ehp}
h_n^{a,b,N;\kappa }(x)=h_n^{a,b,N}(x)
+\frac{(n+a)R_{\kappa}^{-b,-a,a+b+N} (\theta_{-n-1}^{-a-b})}{(n+a+b)R_{\kappa}^{-b,-a,a+b+N} (\theta_{-n}^{-a-b})}
h_{n-1}^{a,b,N}(x),
\end{equation}
where $R_n^{a,b,N}$ are the dual Hahn polynomials (\ref{dhpol}) and the function $\theta_x^u$ is defined in (\ref{defth}).
The positivity of the measure (\ref{hk}) implies that $R_{\kappa}^{-b,-a,a+b+N} (\theta_{-n}^{-a-b})\not =0$, $n\in \NN$.

As happens with Krall Meixner and Krall Laguerre polynomials, we cannot get Krall Jacobi polynomials (\ref{ejacp}) from Krall Hahn polynomials (\ref{ehp}) taking limit as in the Askey scheme for getting Jacobi from Hahn polynomials (see (\ref{blhj})).
\begin{lemma}
\begin{equation}\label{lhj1}
\lim_{N\to\infty}h_n^{a,b,N;\kappa}\left(\frac{(1-x)N}{2}\right)=\begin{cases}\frac{(2n+a+b)
}{n+a+b}P_n^{a,b-1}(x),& \mbox{for $b\not=1,\cdots, \kappa$,}\\
P_n^{a,b;\infty}(x),& \mbox{for $b=1,\cdots, \kappa$,}
\end{cases}
\end{equation}
where $P_n^{a,b;\infty}$ is the degenerated case of (\ref{ejacp}) for $u=\infty$ defined in (\ref{jacii}).
If the parameters do not satisfy the assumptions $-1<a$ and $\kappa<b$, the positivity of the measure (\ref{hk}) is not guaranteed (this is the case when $b=1,\cdots, \kappa $); in that case we  implicitly assume
that $R_{\kappa}^{-b,-a,a+b+N} (\theta_{-n}^{-a-b})\not =0$, $n\in\NN$, so that the polynomials (\ref{ehp}) are well-defined.
\end{lemma}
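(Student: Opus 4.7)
The plan is to start from the explicit two-term expansion (\ref{ehp}) for $h_n^{a,b,N;\kappa}$, substitute $x\to (1-x)N/2$, and pass to the limit $N\to\infty$. Since $h_n^{a,b,N}((1-x)N/2)\to P_n^{a,b}(x)$ and $h_{n-1}^{a,b,N}((1-x)N/2)\to P_{n-1}^{a,b}(x)$ by (\ref{blhj}), the only delicate step is computing the limit of the coefficient
\[
\frac{(n+a)R_{\kappa}^{-b,-a,a+b+N}(\theta_{-n-1}^{-a-b})}{(n+a+b)R_{\kappa}^{-b,-a,a+b+N}(\theta_{-n}^{-a-b})},
\]
which splits naturally into the two cases according to whether (\ref{lhk}) or (\ref{lhk2}) applies.

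For the first case, $b\neq 1,\ldots,\kappa$, the limit (\ref{lhk}) yields a constant $(b-\kappa)_\kappa$ that does not depend on the evaluation point $z$. Hence, after dividing numerator and denominator by $N^\kappa$, the ratio of dual Hahn values tends to $1$ and the whole coefficient tends to $(n+a)/(n+a+b)$. The limit of the expansion is therefore
\[
P_n^{a,b}(x)+\frac{n+a}{n+a+b}P_{n-1}^{a,b}(x),
\]
and a direct application of identity (\ref{Lagder}) with $(\alpha,\beta)=(a,b)$ immediately rewrites this as $\tfrac{2n+a+b}{n+a+b}P_n^{a,b-1}(x)$, which matches the claim.

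For the second case, $b=1,\ldots,\kappa$, the limit (\ref{lhk2}) gives $\tfrac{(b+1)_{\kappa-b}}{(-1)^{b+\kappa}}\lambda_b^{-b-a}(z)$, and the $N$-independent prefactor cancels when we take the ratio. Using (\ref{lth}) with $u=-a-b$ and $j=b$, one computes
\[
\lambda_b^{-a-b}(\theta_{-m}^{-a-b})=(-1)^b(m)_b(-m-a-b+1)_b,
\]
so the ratio of $\lambda$ values at $m=n+1$ and $m=n$ simplifies, after collapsing Pochhammer symbols, to $\frac{(n+b)(n+a+b)}{n(n+a)}$. Multiplying by $(n+a)/(n+a+b)$, the limiting coefficient is $(n+b)/n$, and the limit of the expansion becomes
\[
P_n^{a,b}(x)+\frac{n+b}{n}P_{n-1}^{a,b}(x).
\]

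The main remaining obstacle is identifying this last expression with $P_n^{a,b;\infty}(x)$ as defined implicitly by the determinantal formula (\ref{jacii}). I would do this by equating the right-hand sides of (\ref{jaci}) and (\ref{jacii})---which share the same determinant---after substituting $\alpha=a+1$ and $\beta=\kappa=b$; solving for $P_n^{a,b;\infty}(x)$ yields exactly $P_n^{a,b}(x)+\tfrac{n+b}{n}P_{n-1}^{a,b}(x)$. This closes the argument. Throughout, the implicit assumption that $R_{\kappa}^{-b,-a,a+b+N}(\theta_{-n}^{-a-b})\neq 0$ is what legitimises the division in the coefficient; note that the ``outer'' Krall parameter $\kappa$ disappears in this degenerate regime because the $\lambda_b^{-a-b}$ factor depends only on $b$, which is precisely why the limit is a Krall Jacobi polynomial of multiplicity $b$ rather than $\kappa$.
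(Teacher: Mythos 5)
Your proof is correct and follows essentially the same route as the paper, which simply combines (\ref{ehp}), (\ref{blhj}), (\ref{lhk}) and (\ref{Lagder}) for the first case and (\ref{ehp}), (\ref{blhj}), (\ref{lhk2}), (\ref{lth}) and the $u=\infty$ degeneration of (\ref{ejacp}) for the second; your computed limiting coefficients $(n+a)/(n+a+b)$ and $(n+b)/n$ are exactly right. The only cosmetic difference is that you identify $P_n^{a,b;\infty}(x)=P_n^{a,b}(x)+\tfrac{n+b}{n}P_{n-1}^{a,b}(x)$ by matching (\ref{jaci}) with (\ref{jacii}) rather than by letting $u\to\infty$ directly in (\ref{ejacp}), which gives the same answer.
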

\begin{proof}
Indeed, the first limit in (\ref{lhj1}) can be easily deduced  using (\ref{ehp}), (\ref{blhj}), (\ref{lhk}) and (\ref{Lagder}).

On the other hand, the second limit in (\ref{lhj1}) is an easy consequence of (\ref{ehp}), (\ref{blhj}), (\ref{lhk2}), (\ref{lth}) and (\ref{ejacp}).

\end{proof}

However, one can get Krall Jacobi polynomials (\ref{ejacp}) from Krall Hahn polynomials (\ref{ehp}) if in (\ref{lhj1}) we force $b$ to be a suitable function of $N$. More precisely, we have
\begin{lemma}
$$
\lim_{N\to \infty}h_n^{a,\kappa+\frac{2^{a+\kappa}}{uN^\kappa},N;\kappa}\left(\frac{(1-x)N}{2}\right)=P_n^{a,\kappa;u}(x)
$$
\end{lemma}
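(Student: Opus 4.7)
The plan is to emulate exactly the Meixner$\to$Laguerre strategy: expand via (\ref{ehp}), pass to the limit in each Hahn polynomial using (\ref{blhj2}), pass to the limit in the dual Hahn coefficient using the specifically tailored limit (\ref{lhk3}), and finally match the resulting expression with (\ref{ejacp}).

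First I would set $b=\phi(N):=\kappa+2^{a+\kappa}/(uN^{\kappa})$, so $\phi(N)\to\kappa$ as $N\to\infty$. In the expansion
$$
h_n^{a,\phi(N),N;\kappa}(x)=h_n^{a,\phi(N),N}(x)+\frac{(n+a)\,R_{\kappa}^{-\phi(N),-a,a+\phi(N)+N}(\theta_{-n-1}^{-a-\phi(N)})}{(n+a+\phi(N))\,R_{\kappa}^{-\phi(N),-a,a+\phi(N)+N}(\theta_{-n}^{-a-\phi(N)})}\,h_{n-1}^{a,\phi(N),N}(x),
$$
the two Hahn polynomials evaluated at $(1-x)N/2$ tend to $P_{n}^{a,\kappa}(x)$ and $P_{n-1}^{a,\kappa}(x)$ by (\ref{blhj2}), since $\phi(N)\to\kappa$ while the first parameter is constantly $a$.

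Next I would handle the dual Hahn ratio. Noting that $\phi(N)$ has exactly the form $\phi_{\kappa}^{s}(N)$ of (\ref{lhk3}) with $s=2^{a+\kappa}/u$, and $\psi(N)\equiv a$, the limit (\ref{lhk3}) gives
$$
\lim_{N\to\infty}R_{\kappa}^{-\phi(N),-a,a+\phi(N)+N}(z)=\frac{(\kappa-1)!\,2^{a+\kappa}}{u}+\lambda_{\kappa}^{-\kappa-a}(z).
$$
Since $\theta_{-n-1}^{-a-\phi(N)}\to\theta_{-n-1}^{-a-\kappa}$ and $\theta_{-n}^{-a-\phi(N)}\to\theta_{-n}^{-a-\kappa}$ by continuity, I can use (\ref{lth}) with $u=-\kappa-a$ to compute
$$
\lambda_{\kappa}^{-\kappa-a}(\theta_{-n-1}^{-\kappa-a})=(n+1)_{\kappa}(n+a+1)_{\kappa},\qquad \lambda_{\kappa}^{-\kappa-a}(\theta_{-n}^{-\kappa-a})=(n)_{\kappa}(n+a)_{\kappa},
$$
after turning the two descending Pochhammers into ascending ones by the standard $(-y)_{\kappa}=(-1)^{\kappa}(y-\kappa+1)_{\kappa}$ identity; the two sign factors cancel.

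Finally I would assemble these limits. The coefficient of $P_{n-1}^{a,\kappa}(x)$ becomes
$$
\frac{n+a}{n+a+\kappa}\cdot\frac{(\kappa-1)!\,2^{a+\kappa}/u+(n+1)_{\kappa}(n+a+1)_{\kappa}}{(\kappa-1)!\,2^{a+\kappa}/u+(n)_{\kappa}(n+a)_{\kappa}},
$$
and multiplying numerator and denominator of the inner fraction by $u$ and using $\Gamma(\kappa)=(\kappa-1)!$, this is precisely the coefficient appearing in (\ref{ejacp}) with $\alpha=a$. Hence the limit equals $P_{n}^{a,\kappa;u}(x)$. I expect the only technical nuisance to be the Pochhammer bookkeeping in step three (getting the signs right so that the two $(-1)^{\kappa}$ factors from (\ref{lth}) and from $(-y)_{\kappa}$ cancel correctly); the rest is a direct substitution of the earlier limits.
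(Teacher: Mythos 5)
Your proposal is correct and follows exactly the route the paper indicates: it expands via (\ref{ehp}), takes the Hahn limits with (\ref{blhj2}), handles the dual Hahn ratio with (\ref{lhk3}) (with $s=2^{a+\kappa}/u$ and $\psi\equiv a$) and (\ref{lth}), and matches the resulting coefficient with (\ref{ejacp}). The Pochhammer/sign bookkeeping you flag does work out as you state, so nothing is missing.
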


\begin{proof}
It can be easily deduced  using (\ref{ehp}), (\ref{blhj2}), (\ref{lhk3}), (\ref{lth}) and (\ref{ejacp}).

\end{proof}

The case of adding a Dirac delta at $1$ can be worked on taking into account the following symmetry: the polynomials
$$
\mathcal P_n^{\kappa,\beta;u}(x)=P_n^{\beta,\kappa;u}(-x)
$$
are orthogonal with respect to the Krall Jacobi measure
$(1-x)^{\kappa-1}(1+x)^\beta +\kappa !u\delta_{1}$.

\subsection{Adding Dirac deltas at $-1$ and $1$}
We now add Dirac deltas at $-1$ and $1$ to a Jacobi weight and assume that $\alpha$ and $\beta$ are positive integers.
For $\kappa,\sigma$ positive integers, and $u,v\in \RR$ with $u,v> 0$,  the orthogonal polynomials $(P_n^{\begin{subarray}{l}
       u,v\vspace{-.055cm}\\
       \kappa,\sigma
   \end{subarray}})_n$ with respect to the positive measure
\begin{equation}\label{jack2}
(1-x)^{\kappa-1}(1+x)^{\sigma-1} +\frac{\kappa!u}{2}\delta_1+\frac{\sigma !v}{2}\delta_{-1},
\end{equation}
are eigenfunctions of a differential operator of order $2\kappa+2\sigma+2$ (see \cite{koekoe2,Il1,ddI3}). Orthogonal polynomials $(P_n^{\begin{subarray}{l}
       u,v\vspace{-.055cm}\\
       \kappa,\sigma
   \end{subarray}})_n$ with respect to (\ref{jack2}) can be expanded in terms of three consecutive Jacobi polynomials as follows (see Theorem 1.1 in \cite{ddI3}; also \cite{Il1}) (it is now better to use determinantal notation)
\begin{equation}\label{ejacp2}
P_n^{\begin{subarray}{l}
       u,v\vspace{-.055cm}\\
       \kappa,\sigma
   \end{subarray}}(x)=\begin{vmatrix} P_n^{\kappa,\sigma}(x)&P_{n-1}^{\kappa,\sigma}(x)&P_{n-2}^{\kappa,\sigma}(x)\\
u
+\frac{T_{\sigma}^{\kappa}(n)}{(n+1)_{\kappa}}&\frac{un}{(n+\kappa)}
+\frac{T_{\sigma}^{\kappa}(n-1)}{(n+1)_{\kappa}}&\frac{u(n-1)_2}{(n+\kappa-1)_2}
+\frac{T_{\sigma}^{\kappa}(n-2)}{(n+1)_{\kappa}}\\
v
+\frac{T_{\kappa}^{\sigma}(n)}{(n+1)_{\sigma}}&\frac{-vn}{(n+\sigma)}
-\frac{T_{\kappa}^{\sigma}(n-1)}{(n+1)_{\sigma}}&\frac{v(n-1)_2}{(n+\sigma-1)_2}
+\frac{T_{\kappa}^{\sigma}(n-2)}{(n+1)_{\sigma}}
\end{vmatrix}
\end{equation}
where $T_{\sigma}^{\kappa}(n)=\frac{2^{\kappa+\sigma}\Gamma(\kappa)}{(n+\sigma+1)_{\kappa}}$.

The limiting case $u=v=\infty$ is given by
\begin{equation}\label{ejacp2i}
P_n^{\begin{subarray}{l}
       \infty,\infty \vspace{-.055cm}\\
       \kappa,\sigma
   \end{subarray}}(x)=\begin{vmatrix} P_n^{\kappa,\sigma}(x)&P_{n-1}^{\kappa,\sigma}(x)&P_{n-2}^{\kappa,\sigma}(x)\\
1&\frac{n}{(n+\kappa)}&\frac{(n-1)_2}{(n+\kappa-1)_2}
\\
1&\frac{-n}{(n+\sigma)}&\frac{(n-1)_2}{(n+\sigma-1)_2},
\end{vmatrix}, \quad n\ge 2
\end{equation}
with initial values $P_0^{\begin{subarray}{l}
       \infty,\infty \vspace{-.055cm}\\
       \kappa,\sigma
   \end{subarray}}(x)=1$ and
$$
P_1^{\begin{subarray}{l}
       \infty,\infty \vspace{-.055cm}\\
       \kappa,\sigma
   \end{subarray}}(x)=
\begin{vmatrix} \frac{1}{1+\kappa}&\frac{T^\kappa_\sigma (-1)}{(2)_\kappa}\\
\frac{-1}{1+\sigma}&\frac{T^\sigma_\kappa (-1)}{(2)_\sigma}\end{vmatrix}P_1^{\kappa,\sigma}(x)-
\begin{vmatrix}
1&\frac{T^\kappa_\sigma (-1)}{(2)_\kappa}\\
1&\frac{T^\sigma_\kappa (-1)}{(2)_\sigma}\end{vmatrix}.
$$

We now consider the following Christoffel transform of the Hahn measure (there are other possibilities: see \cite{du0} and also \cite{ddI2}). For $\kappa,\sigma$ and $N$ positive integers and $a$ and $b$ real numbers with $\kappa<a$ and $\sigma<b$, the orthogonal polynomials $h_n^{\begin{subarray}{l}
       \kappa,\sigma \vspace{-.055cm}\\
       a,b,N
   \end{subarray}}$, $n=0,\cdots ,N$, with respect to the positive measure
\begin{equation}\label{hk2}
\sum_{x=0}^N (a-\kappa+1+x)_{\kappa} (N+b-\sigma+1-x)_{\sigma}\binom{a-\kappa-1+x}{x}\binom{N+b-\sigma-1-x}{N-x}\delta_x,
\end{equation}
are eigenfunctions of a difference operator of order $2\kappa+2\sigma+2$.  Orthogonal polynomials $(h_n^{\begin{subarray}{l}
       \kappa,\sigma \vspace{-.055cm}\\
       a,b,N
   \end{subarray}})_n$ with respect to (\ref{hk2}) can be expanded in terms of three consecutive Hahn polynomials as follows (see Theorem 6.2 in \cite{ddI2})
\begin{equation}\label{ehp2}
h_n^{\begin{subarray}{l}
       \kappa,\sigma \vspace{-.055cm}\\
       a,b,N
   \end{subarray}}(x)=\begin{vmatrix} h_n^{a,b,N}(x)&h_{n-1}^{a,b,N}(x)&h_{n-2}^{a,b,N}(x)\\
\frac{(n+b-1)_2}{(n+a+b-1)_2}S^{b,a,N}_{\kappa}(n)&
\frac{n+b-1}{n+a+b-1}S^{b,a,N}_{\kappa}(n-1)&
S^{b,a,N}_{\kappa}(n-2)\\
\frac{(n+a-1)_2}{(n+a+b-1)_2}S^{a,b,N}_{\sigma}(n)&
-\frac{n+a-1}{n+a+b-1}S^{a,b,N}_{\sigma}(n-1)&
S^{a,b,N}_{\sigma}(n-2)
\end{vmatrix},
\end{equation}
where $S^{a,b,N}_\kappa(n)=R_\kappa^{-b,-a,a+b+N}(\theta_{-n-1}^{-a-b})$.
The positivity of the measure (\ref{hk2}) implies that the polynomial $h_n^{\begin{subarray}{l}
       \kappa,\sigma \vspace{-.055cm}\\
       a,b,N
   \end{subarray}}$ has degree $n$ for $n\in \NN$.

As happens with the other cases considered in this paper, we cannot get Krall Jacobi polynomials (\ref{ejacp2}) from Krall Hahn polynomials (\ref{ehp2}) taking limit as in the Askey scheme for getting Jacobi from Hahn polynomials (see (\ref{blhj})).

\begin{lemma} For $a\not=1,\cdots, \kappa$, and $b\not=1,\cdots, \sigma$, we have
\begin{equation*}\label{lhj2}
\lim_{N\to\infty}\frac{h_n^{\begin{subarray}{l}
       \kappa,\sigma \vspace{-.055cm}\\
       a,b,N
   \end{subarray}}((1-x)N/2)}{(a-\kappa)_{\kappa}(b-\sigma)_{\sigma}N^{\kappa+\sigma}}=\frac{
(2n+a+b-2)_3
}{(n+a+b)(n+a+b-1)^2}P_n^{a-1,b-1}(x).
\end{equation*}

For $a\not =1,\cdots, \kappa$, and $b=1,\cdots, \sigma$, we have ($n\ge 1$)
\begin{equation*}\label{xlhj2}
\lim_{N\to\infty}\frac{h_n^{\begin{subarray}{l}
       \kappa,\sigma \vspace{-.055cm}\\
       a,b,N
   \end{subarray}}((1-x)N/2)}{(a-\kappa)_{\kappa}N^{\kappa+\sigma-b}}=
\frac{n(2n+a+b-2)(2n+a+b)
}{(n+b)(n+a+b-1)_2}\Lambda_\sigma^{b,a}(n)P_n^{a-1,b;\infty}(x),
\end{equation*}
where $\Lambda_\sigma^{b,a}(n)=(-1)^{\sigma+b}(b+1)_{\sigma-b}(n+a-1)_b(n+1)_{b}$ and $P_n^{a,b;\infty}$ is the degenerated case of (\ref{ejacp}) for $u=\infty$.

For $a=1,\cdots, \kappa$, and $b\not=1,\cdots, \sigma$, we have ($n\ge 1$)
\begin{equation*}\label{ylhj2}
\lim_{N\to\infty}\frac{h_n^{\begin{subarray}{l}
       \kappa,\sigma \vspace{-.055cm}\\
       a,b,N
   \end{subarray}}((1-x)N/2)}{(b-\sigma)_{\sigma}N^{\kappa+\sigma-a}}=
\frac{n(2n+a+b-2)(2n+a+b)
}{(-1)^n(n+a)(n+a+b-1)_2}\Lambda_\kappa^{a,b}(n)P_n^{b-1,a;\infty}(-x).
\end{equation*}

For $a=1,\cdots, \kappa$, and $b=1,\cdots, \sigma$, we have
\begin{equation*}\label{lhj3}
\lim_{N\to\infty}\frac{h_n^{\begin{subarray}{l}
       \kappa,\sigma \vspace{-.055cm}\\
       a,b,N
   \end{subarray}}((1-x)N/2)}{N^{\kappa+\sigma-a-b}}=
\Lambda_{\kappa}^{a,b}(n)\Lambda_{\sigma}^{b,a}(n)P_n^{\begin{subarray}{l}
       \infty,\infty \vspace{-.055cm}\\
       a,b
   \end{subarray}}(x),
\end{equation*}
where $P_n^{\begin{subarray}{l}
       \infty,\infty \vspace{-.055cm}\\
       a,b
   \end{subarray}}$ is the degenerated case of (\ref{ejacp2}) for $u=v=\infty$.

If the parameters do not satisfy the assumptions $\kappa<a$ and $\sigma<b$,  the positivity of the measure (\ref{hk2}) is not guaranteed (this is
what happens in the last three cases);  we then  implicitly assume
that the polynomials (\ref{ehp2}) have degree $n$.
\end{lemma}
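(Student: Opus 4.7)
The plan is to substitute $x\to(1-x)N/2$ into the determinantal expansion (\ref{ehp2}) and pass to the limit as $N\to\infty$, handling each row of the $3\times 3$ determinant separately. The first row, consisting of Hahn polynomials $h_{n-k}^{a,b,N}((1-x)N/2)$ for $k=0,1,2$, is handled uniformly by (\ref{blhj}), which converts each entry to the corresponding Jacobi polynomial $P_{n-k}^{a,b}(x)$. The second and third rows involve the dual Hahn quantities $S^{b,a,N}_\kappa(n-k)=R_\kappa^{-a,-b,a+b+N}(\theta_{-n-1+k}^{-a-b})$ and $S^{a,b,N}_\sigma(n-k)=R_\sigma^{-b,-a,a+b+N}(\theta_{-n-1+k}^{-a-b})$; their $N$-asymptotics are governed by (\ref{lhk}) when $a\notin\{1,\dots,\kappa\}$ (respectively $b\notin\{1,\dots,\sigma\}$), and by (\ref{lhk2}) in the exceptional case.

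In Case 1 ($a\notin\{1,\dots,\kappa\}$ and $b\notin\{1,\dots,\sigma\}$), (\ref{lhk}) gives $S^{b,a,N}_\kappa(n-k)/N^\kappa\to(a-\kappa)_\kappa$ and $S^{a,b,N}_\sigma(n-k)/N^\sigma\to(b-\sigma)_\sigma$, both limits independent of $k$. Dividing by $(a-\kappa)_\kappa(b-\sigma)_\sigma N^{\kappa+\sigma}$ eliminates the constants from the second and third rows, and the resulting $3\times 3$ determinant is precisely the one on the right-hand side of (\ref{Lagder2}) with $\alpha=a,\beta=b$, yielding the claimed multiple of $P_n^{a-1,b-1}(x)$.

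In Cases 2 and 3, exactly one row has a $k$-dependent limit. For Case 2, (\ref{lhk2}) combined with (\ref{lth}) gives, for each $k=0,1,2$,
$$\frac{S^{a,b,N}_\sigma(n-k)}{N^{\sigma-b}}\longrightarrow (-1)^{b+\sigma}(b+1)_{\sigma-b}\,(n+1-k)_b(n+a+1-k)_b.$$
The idea is to factor $(-1)^{b+\sigma}(b+1)_{\sigma-b}(n+1)_b(n+a-1)_b=\Lambda_\sigma^{b,a}(n)$ out of the third row; a short computation shows the three entries then reduce to $1$, $-n/(n+b)$ and $(n-1)_2/(n+b-1)_2$. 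Combined with the second-row limit from (\ref{lhk}) and the first-row limit from (\ref{blhj}), the remaining determinant is exactly the one appearing in identity (\ref{jaci}), and comparing with the degenerate Krall Jacobi formula (\ref{jacii}) identifies it with $P_n^{a-1,b;\infty}(x)$ up to the rational prefactor stated. Case 3 follows either by the same scheme with the roles of $(a,\kappa)$ and $(b,\sigma)$ interchanged, or more economically by invoking the symmetry $\mathcal{P}_n^{\kappa,\beta;u}(x)=P_n^{\beta,\kappa;u}(-x)$ noted at the end of Section 3.1 together with the evident reflection symmetry $h_n^{\kappa,\sigma;a,b,N}(x)=(-1)^n h_n^{\sigma,\kappa;b,a,N}(N-x)$ of the Hahn Christoffel transform.

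In Case 4, both rows have $k$-dependent limits through (\ref{lhk2}) and (\ref{lth}). After dividing by $N^{\kappa+\sigma-a-b}$ and factoring $\Lambda_\kappa^{a,b}(n)$ from the second row and $\Lambda_\sigma^{b,a}(n)$ from the third row using the same simplification as in Case 2, the remaining $3\times 3$ determinant has the structure of (\ref{ejacp2i}), which defines $P_n^{\infty,\infty;a,b}(x)$; this gives the last formula. The main technical obstacle throughout is the bookkeeping of signs and Pochhammer ratios needed to recognize that, after factoring out the $\Lambda$ constants, the post-limit rows match those of (\ref{jacii}) and (\ref{ejacp2i}) exactly; once (\ref{lth}) is used to rewrite $\lambda_b^{-a-b}(\theta_{-n-1+k}^{-a-b})=(n+1-k)_b(n+a+1-k)_b$, the algebraic simplification in each entry is routine but must be carried out uniformly in $k$ so that a single common factor can legitimately be extracted from the entire row.
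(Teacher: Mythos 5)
Your proposal is correct and follows exactly the route the paper indicates: substitute into the determinantal formula (\ref{ehp2}), take row-by-row limits via (\ref{blhj}) for the Hahn row and (\ref{lhk}), (\ref{lhk2}) together with (\ref{lth}) for the dual Hahn rows, and then identify the limiting determinants with (\ref{Lagder2}), (\ref{jacii}) and (\ref{ejacp2i}) respectively. The paper's own proof is only a citation of these same formulas, so your write-up, including the explicit factoring of $\Lambda_\kappa^{a,b}(n)$ and $\Lambda_\sigma^{b,a}(n)$ from the rows, simply supplies the omitted bookkeeping and is consistent with it.
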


\begin{proof}
Indeed, the first limit in this Lemma can be easily deduced  using (\ref{ehp2}), (\ref{blhj}), (\ref{lhk}) and (\ref{Lagder2}).

The second and third limits can be deduced from (\ref{ehp2}), (\ref{blhj}), (\ref{lhk}), (\ref{lhk2}), (\ref{lth}) and (\ref{jacii}).

Finally, the last limit  is an easy consequence of (\ref{ehp2}), (\ref{blhj}), (\ref{lhk2}), (\ref{lth}) and (\ref{ejacp2}).

\end{proof}

However, one can get Krall Jacobi polynomials (\ref{ejacp2}) from Krall Hahn polynomials (\ref{ehp2}) if  we force $a$ and $b$ to be suitable functions of $N$. More precisely, we have.
\begin{lemma}
$$
\lim_{N\to \infty}\frac{uvh_n^{\begin{subarray}{l}
       \kappa,\sigma\vspace{-.13cm}\\
\kappa+\frac{2^{\kappa+\sigma}}{uN^\kappa},\sigma+\frac{2^{\kappa+\sigma}}{vN^\sigma},N
   \end{subarray}}
\left(\frac{(1-x)N}{2}\right)}{(n+1)_\kappa(n+\sigma-1)_{\kappa}(n+1)_\sigma(n+\kappa-1)_{\sigma}}=
P_n^{\begin{subarray}{l}
       u,v\vspace{-.055cm}\\
       \kappa,\sigma
   \end{subarray}}(x).
$$
\end{lemma}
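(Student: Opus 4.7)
The approach is to take the $N\to\infty$ limit of the determinantal formula (\ref{ehp2}) entry by entry, with the substitutions $a = \kappa + 2^{\kappa+\sigma}/(uN^\kappa)$ and $b = \sigma + 2^{\kappa+\sigma}/(vN^\sigma)$, and to match the result with (\ref{ejacp2}). Since $a\to\kappa$ and $b\to\sigma$, (\ref{blhj2}) applied to the top row of (\ref{ehp2}) immediately yields $P_{n-j}^{\kappa,\sigma}(x)$ for $j = 0,1,2$, which is exactly the top row of (\ref{ejacp2}).

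For the middle row, the dual Hahn factor $S^{b,a,N}_\kappa(k) = R_\kappa^{-a,-b,a+b+N}(\theta^{-a-b}_{-k-1})$ fits the hypothesis of (\ref{lhk3}) with $\phi_\kappa^s(N) = a$ (so $s = 2^{\kappa+\sigma}/u$) and $\psi(N) = b \to \sigma$. Using (\ref{lth}) to compute $\lambda_\kappa^{-\kappa-\sigma}(\theta^{-\kappa-\sigma}_{-k-1}) = (k+1)_\kappa(k+\sigma+1)_\kappa$, one obtains
\[
\lim_{N\to\infty} S^{b,a,N}_\kappa(k) = \frac{(\kappa-1)!\, 2^{\kappa+\sigma}}{u} + (k+1)_\kappa(k+\sigma+1)_\kappa,
\]
while the scalar prefactors $(n+b-1)_{2-j}/(n+a+b-1)_{2-j}$ tend to $(n+\sigma-1)_{2-j}/(n+\kappa+\sigma-1)_{2-j}$. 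The key combinatorial identity
\[
\frac{(n+\sigma-1)_{2-j}}{(n+\kappa+\sigma-1)_{2-j}}(n-j+\sigma+1)_\kappa = (n+\sigma-1)_\kappa, \qquad j = 0,1,2,
\]
together with the definition $T^\kappa_\sigma(k) = 2^{\kappa+\sigma}\Gamma(\kappa)/(k+\sigma+1)_\kappa$ and the elementary factorization $(n-j+1)_\kappa = (n-j+1)_j(n+1)_{\kappa-j}$, then shows that the limit of the middle row equals $(n+1)_\kappa(n+\sigma-1)_\kappa/u$ times the middle row of (\ref{ejacp2}). An entirely symmetric argument (now using $\phi_\sigma^s(N) = b$ with $s = 2^{\kappa+\sigma}/v$ and $\psi(N) = a \to \kappa$) gives the limit of the bottom row as $(n+1)_\sigma(n+\kappa-1)_\sigma/v$ times the bottom row of (\ref{ejacp2}); the sign in the negative entry of the third row of (\ref{ehp2}) matches the one in the corresponding entry of (\ref{ejacp2}) automatically.

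Extracting these two scalar factors from the determinant yields the stated limit. The main obstacle is the Pochhammer bookkeeping: the factor $(k+\sigma+1)_\kappa$ arising naturally from $\lambda_\kappa^{-\kappa-\sigma}$ must be reconciled with the $(n+\sigma-1)_\kappa$ in the denominator on the left-hand side, and this is accomplished precisely by the column-wise cancellation identity displayed above. Once that identity is spotted, the remainder is routine linear-algebraic manipulation of the $3\times 3$ determinant.
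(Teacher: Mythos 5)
Your proposal is correct and follows exactly the route the paper takes: the paper's proof is a one-line citation of (\ref{ehp2}), (\ref{blhj2}), (\ref{lhk3}), (\ref{lth}) and (\ref{ejacp2}), which are precisely the five ingredients you combine, and your Pochhammer bookkeeping (the column-wise identity $\frac{(n+\sigma-1)_{2-j}}{(n+\kappa+\sigma-1)_{2-j}}(n-j+\sigma+1)_\kappa=(n+\sigma-1)_\kappa$ and the extraction of the row factors $(n+1)_\kappa(n+\sigma-1)_\kappa/u$ and $(n+1)_\sigma(n+\kappa-1)_\sigma/v$) checks out. Nothing to add.
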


\begin{proof}
It can be easily deduced  using (\ref{ehp2}), (\ref{blhj2}), (\ref{lhk3}), (\ref{lth}) and (\ref{ejacp2}).
\end{proof}

\section*{References}



\end{document}